\begin{document}

\newtheorem{theorem}{Theorem}[section]
\newtheorem{corollary}[theorem]{Corollary}
\newtheorem{definition}[theorem]{Definition}
\newtheorem{conjecture}[theorem]{Conjecture}
\newtheorem{question}[theorem]{Question}
\newtheorem{lemma}[theorem]{Lemma}
\newtheorem{proposition}[theorem]{Proposition}
\newtheorem{example}[theorem]{Example}
\newenvironment{proof}{\noindent {\bf
Proof.}}{\rule{3mm}{3mm}\par\medskip}
\newcommand{\remark}{\medskip\par\noindent {\bf Remark.~~}}
\newcommand{\pp}{{\it p.}}
\newcommand{\de}{\em}

\newcommand{\JEC}{{\it Europ. J. Combinatorics},  }
\newcommand{\JCTB}{{\it J. Combin. Theory Ser. B.}, }
\newcommand{\JCT}{{\it J. Combin. Theory}, }
\newcommand{\JGT}{{\it J. Graph Theory}, }
\newcommand{\ComHung}{{\it Combinatorica}, }
\newcommand{\DM}{{\it Discrete Math.}, }
\newcommand{\ARS}{{\it Ars Combin.}, }
\newcommand{\SIAMDM}{{\it SIAM J. Discrete Math.}, }
\newcommand{\SIAMADM}{{\it SIAM J. Algebraic Discrete Methods}, }
\newcommand{\SIAMC}{{\it SIAM J. Comput.}, }
\newcommand{\ConAMS}{{\it Contemp. Math. AMS}, }
\newcommand{\TransAMS}{{\it Trans. Amer. Math. Soc.}, }
\newcommand{\AnDM}{{\it Ann. Discrete Math.}, }
\newcommand{\NBS}{{\it J. Res. Nat. Bur. Standards} {\rm B}, }
\newcommand{\ConNum}{{\it Congr. Numer.}, }
\newcommand{\CJM}{{\it Canad. J. Math.}, }
\newcommand{\JLMS}{{\it J. London Math. Soc.}, }
\newcommand{\PLMS}{{\it Proc. London Math. Soc.}, }
\newcommand{\PAMS}{{\it Proc. Amer. Math. Soc.}, }
\newcommand{\JCMCC}{{\it J. Combin. Math. Combin. Comput.}, }
\newcommand{\GC}{{\it Graphs Combin.}, }

\title{The Laplacian Eigenvalues and Invariants of Graphs\thanks{
Supported by National Natural Science Foundation of China
(No:11271256).}}
\author{  Rong-Ying Pan\\
{\small School of Science, Suzhou Vocational University}\\
{\small Suzhou, Jiangsu 215004, P.R.China}\\
Jing Yan\\
{\small School of Science, Jiangsu University of Technology}\\
{\small Changzhou, Jiangsu, 213001, P.R.China}\\
Xiao-Dong Zhang \\
{\small Department of Mathematics, and MOE-LSC, Shanghai Jiao Tong University} \\
{\small  1954 Huashan road, Shanghai,200030, P.R. China}\\
{\small Email:  xiaodong@sjtu.edu.cn}
 }
\date{}
\maketitle
 \begin{abstract}
 In this paper, we  investigate some relations between the invariants (including vertex
  and edge connectivity and forwarding indices)  of a graph and its Laplacian eigenvalues.
 In addition, we present a sufficient condition for the existence
 of Hamiltonicity  in a graph involving its  Laplacian
 eigenvalues.
 \end{abstract}

{{\bf Key words:}Laplacian  eigenvalue,  Connectivity,
Hamiltonicity, Forwarding index.
 }
      {{\bf AMS Classifications:} 05C50, 15A42}
\vskip 0.5cm

\section{introduction}

Let $G = (V,~E)$ be a simple graph with  vertex set
$V(G)=\{v_1,\cdots, v_n\}$ and edge set $E(G)=\{e_1,\cdots,
e_m\}$. Denote by $d(v_i)$ the {\it degree} of vertex $v_i$. If
$D(G)=diag(d_u, u\in V)$ is the diagonal matrix of vertex degrees
of $G$ and $A(G)$ is the $0-1$ {\it  adjacency matrix} of $G$, the
matrix $L(G)=D(G)-A(G)$ is called the {\it Laplacian matrix} of a
graph $G$ Moreover, the eigenvalues of $L(G)$ are called  {\it
Laplacian eigenvalues} of $G$. Furthermore, the Laplacian
eigenvalues of $G$ are denoted by

 $$0=\sigma_0\le \sigma_1\le\cdots\le \sigma_{n-1},$$
 since $L(G)$ is  positive semi-definite.
In recent years, the relations between invariants of a graph and
its Laplacian eigenvalues have been investigated extensively. For
example, Alon in \cite{alon1986} established that there are
relations between an expander of a graph and its second smallest
eigenvalue; Mohar in \cite{mohar1992} presented a necessary
condition foe the existence of Hamiltonicity in a graph in terms
of its Laplacian eigenvalues. The reader is refereed to
\cite{chung1997}, \cite{heydemann1989} and \cite{Merris1994} etc.

  The purpose of this paper is to present some relations between
  some invariants of a graph and its Laplacian eigenvalues. In
  Section 2, the relations between the vertex and edge
  connectivities  of a graph and its Laplacian eigenvalues are
  investigated. In Section 3, we present  a sufficient condition
  for the existence of Hamiantonicity in a graph involving its
Laplacian  eigenvalues. In last Section, the lower bounds for
forwarding indices of networks are obtained.  Before finishing
this section, we present a general discrepancy inequality from
Chung\cite{chung2004}, which is very useful for later.

 For a subset $X$ of vertices in $G$, the {\it volume} ${\rm vol}(X)$ is
 defined by ${\rm vol}(X)=\sum_{v\in X}d_v$, where $d_v$ is the
 degree of $v$.  For any two subsets $X$ and $Y$ of vertices in
 $G$,  denote
 $$e(X,Y)=\{(x,y): x\in X, y\in Y, \{x, y\}\in E(G)\}.$$
 \begin{theorem}
 \label{disc}
 Let $G$ be a simple graph with $n$ vertices and average degree
 $d=\frac{1}{n}{\rm vol}(G)$. If the Laplacian eigenvalues
 $\sigma_i$ of $G$ satisfy $|d-\sigma_i|\le \theta$ for
 $i=1,2,\cdots, n-1$, then for any two subsets $X$ and $Y$ of
 vertices in $G$,  we have
 $$|e(X,Y)-\frac{d}{n}|X||Y|+d|X\bigcap Y|-{\rm vol}(X\bigcap
 Y)|\le\frac{\theta}{n}\sqrt{|X|(n-|X|)|Y|(n-|Y|}.$$
 \end{theorem}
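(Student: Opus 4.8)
The plan is to recognize the quantity inside the absolute value as a single bilinear form in the characteristic vectors of $X$ and $Y$, with respect to a symmetric matrix whose nonzero spectrum is controlled by $\theta$, and then apply the Cauchy--Schwarz inequality. Let $\chi_X$ and $\chi_Y$ denote the $0$--$1$ characteristic (column) vectors of $X$ and $Y$. Since $e(X,Y)=\chi_X^{T}A(G)\chi_Y$ and $A(G)=D(G)-L(G)$, and since $\chi_X^{T}D(G)\chi_Y=\sum_{v\in X\cap Y}d_v={\rm vol}(X\cap Y)$, I get $e(X,Y)-{\rm vol}(X\cap Y)=-\chi_X^{T}L(G)\chi_Y$. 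Writing $I$ for the identity and $J$ for the all-ones matrix, I also have $d\,|X\cap Y|=\chi_X^{T}(dI)\chi_Y$ and $\frac{d}{n}|X||Y|=\chi_X^{T}(\frac{d}{n}J)\chi_Y$, so the whole left-hand side collapses to $|\chi_X^{T}M\chi_Y|$ with $M:=dI-\frac{d}{n}J-L(G)$.

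First I would analyze the spectrum of $M$. Let $\phi_0,\phi_1,\dots,\phi_{n-1}$ be an orthonormal basis of Laplacian eigenvectors, $L(G)\phi_i=\sigma_i\phi_i$, where $\phi_0=\frac{1}{\sqrt{n}}\mathbf{1}$ is the normalized all-ones vector belonging to $\sigma_0=0$. Because $\frac{1}{n}J=\phi_0\phi_0^{T}$, the three terms of $M$ act diagonally in this basis: $M\phi_0=d\phi_0-d\phi_0-0=0$, while for $i\ge 1$ one has $\phi_0\phi_0^{T}\phi_i=0$, hence $M\phi_i=(d-\sigma_i)\phi_i$. Thus $M$ is symmetric with eigenvalue $0$ on $\phi_0$ and eigenvalues $d-\sigma_i$ on $\phi_i$ for $i\ge1$, each of absolute value at most $\theta$ by hypothesis.

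Then I would expand $\chi_X=\sum_i a_i\phi_i$ and $\chi_Y=\sum_i b_i\phi_i$, so that $\chi_X^{T}M\chi_Y=\sum_{i\ge1}(d-\sigma_i)a_ib_i$, the $i=0$ term vanishing since $M\phi_0=0$. Cauchy--Schwarz together with $|d-\sigma_i|\le\theta$ gives $|\chi_X^{T}M\chi_Y|\le\theta\bigl(\sum_{i\ge1}a_i^2\bigr)^{1/2}\bigl(\sum_{i\ge1}b_i^2\bigr)^{1/2}$. The closing step is Parseval bookkeeping: $\sum_{i\ge0}a_i^2=\|\chi_X\|^2=|X|$ while $a_0=\phi_0^{T}\chi_X=|X|/\sqrt{n}$, whence $\sum_{i\ge1}a_i^2=|X|-|X|^2/n=\frac{1}{n}|X|(n-|X|)$, and similarly for $Y$. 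Substituting these two expressions produces exactly the bound $\frac{\theta}{n}\sqrt{|X|(n-|X|)|Y|(n-|Y|)}$.

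The one place needing care—the crux of the argument—is the algebraic rearrangement that forces the rank-one correction $\frac{d}{n}J$ to annihilate precisely the $\phi_0$-component of $M$. This is what removes the uncontrolled trivial eigenvalue $\sigma_0=0$ (equivalently, the direction $\mathbf{1}$, on which the discrepancy would otherwise be unbounded) from the bilinear form, leaving only directions on which the hypothesis $|d-\sigma_i|\le\theta$ applies. Everything else is routine spectral decomposition and Cauchy--Schwarz.
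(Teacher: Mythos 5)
Your proof is correct, and it is essentially the standard argument: the paper itself gives no proof of this theorem (it is quoted from Chung's survey \cite{chung2004}), and your route---collapsing the left-hand side to $\chi_X^{T}(dI-\frac{d}{n}J-L(G))\chi_Y$, noting the rank-one term kills the trivial eigendirection $\mathbf{1}$, then applying Cauchy--Schwarz with $\sum_{i\ge 1}a_i^2=\frac{1}{n}|X|(n-|X|)$---is exactly the spectral-decomposition proof behind the cited result, adapted correctly to the combinatorial Laplacian. No gaps; in particular your handling of the $\phi_0$-component is the right crux, and the argument remains valid even when $\sigma_1=0$ (disconnected $G$), since the hypothesis $|d-\sigma_i|\le\theta$ still covers those directions.
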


\section{Connectivity}
The vertex connectivity of a graph $G$ is the minimum number of
vertices that we need to delete to make $G$ is disconnected and
denoted by $\kappa(G)$. Fiedler in \cite{fiedler} proved that if
$G$ is not the complete graph, then $\kappa(G)$ is at least the value of the
second smallest Laplacian eigenvalue. In here, we present another
bound for the vertex connectivity of a graph.

\begin{theorem}
 \label{vertexcon}
Let $G$ be a simple graph  of order $n$ with the smallest degree
$\delta\le \frac{n}{2}$ and average degree $d$. If the Laplacian
eigenvalues $\sigma_i$ satisfies $|d-\sigma_i|\le \theta$ for
$i\neq 0$, then
$$\kappa(G)\ge \delta-(2+2\sqrt{3})^2\frac{\theta^2}{\delta}.$$
 \end{theorem}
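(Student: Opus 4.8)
The plan is to apply the discrepancy inequality (Theorem \ref{disc}) to the two sides of a minimum vertex cut. First I would dispose of the trivial cases. By Whitney's inequality $\kappa(G)\le\delta$ always, so if $\kappa(G)\ge\delta$ there is nothing to prove; assume $\kappa(G)<\delta$. Likewise, if $\theta\ge\delta/(2+2\sqrt{3})$ then the right-hand side is $\le 0\le\kappa(G)$, so assume $\theta<\delta/(2+2\sqrt{3})$, and in particular $\delta>\theta$. We may also assume $G$ is connected and non-complete: a disconnected $G$ has $\sigma_1=0$, forcing $\theta\ge d\ge\delta$ (the trivial regime), and a complete $G$ has $\kappa(G)=\delta$. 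Now let $S$ be a minimum vertex cut with $|S|=\kappa=\kappa(G)$, and write $V\setminus S=A\cup B$, where $A$ and $B$ are unions of the resulting components so that $e(A,B)=0$; set $a=|A|\le|B|=b$, so that $a+b+\kappa=n$.

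Next I would record the degree bounds. A vertex of $A$ has all of its neighbours in $A\cup S$, so its degree is at most $a-1+\kappa$; since every degree is at least $\delta$, this gives $a\ge\delta-\kappa+1$, and symmetrically $b\ge\delta-\kappa+1$. Then I apply Theorem \ref{disc} with $X=A$, $Y=B$. Because $e(A,B)=0$ and $A\cap B=\emptyset$, the left-hand side collapses to $\frac{d}{n}ab$, yielding
$$\frac{d}{n}\,ab\le\frac{\theta}{n}\sqrt{a(n-a)\,b(n-b)}.$$
Substituting $n-a=b+\kappa$ and $n-b=a+\kappa$, squaring, and cancelling one factor of $ab$ gives $d^2ab\le\theta^2(a+\kappa)(b+\kappa)$. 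Since $(a+\kappa)(b+\kappa)=ab+\kappa(a+b)+\kappa^2=ab+\kappa n$ and $d\ge\delta$, this rearranges to
$$(\delta^2-\theta^2)\,ab\le\theta^2\kappa n.$$

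The final step is to bound $ab$ from below and to eliminate $n$. From $b\ge(n-\kappa)/2$ (as $b\ge a$) together with $a\ge\delta-\kappa+1$ I obtain $ab\ge(\delta-\kappa+1)\frac{n-\kappa}{2}$; inserting this and dividing gives
$$\delta-\kappa+1\le\frac{2\theta^2\kappa}{\delta^2-\theta^2}\cdot\frac{n}{n-\kappa}.$$
Here the standing hypothesis $\delta\le n/2$, together with $\kappa\le\delta$, bounds $\frac{n}{n-\kappa}\le 2$ and $\kappa\le\delta$, while in the remaining range $\theta<\delta/(2+2\sqrt{3})$ the quantity $\delta^2-\theta^2$ is bounded below by a fixed multiple of $\delta^2$. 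Feeding these elementary estimates in converts the displayed inequality into $\delta-\kappa\le(2+2\sqrt{3})^2\theta^2/\delta$, which is the claim.

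The main obstacle is this last step: the clean relation $(\delta^2-\theta^2)ab\le\theta^2\kappa n$ still carries the ambient parameter $n$, and the work lies in turning it into a scale-free bound. This is precisely where the hypothesis $\delta\le n/2$ earns its keep, controlling $n/(n-\kappa)$, while Whitney's $\kappa\le\delta$ removes the last appearance of $\kappa$. The comparatively generous constant $(2+2\sqrt{3})^2$ is the price of these crude but uniform estimates, and tighter bookkeeping of the lower bound for $ab$ would reduce it.
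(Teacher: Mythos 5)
Your proof is correct, but it follows a genuinely different route from the paper's. The paper argues by contradiction: it takes $U$ to be the smallest component of $G[V\setminus S]$ and $W$ the rest, applies Theorem~\ref{disc} to $(U,W)$ to get an upper bound on $|U|$, then invokes a separate result (Corollary~4 of \cite{chung2004}) to bound the internal edge count $2|e(U)|$, and finally squeezes $|e(U,S)|$ between a lower bound $\delta|U|-2|e(U)| > (\frac{1}{2}+\frac{1}{c})\delta|U|$ and an upper bound $(\frac{1}{2}+\frac{1}{c})\delta|U|$ obtained from a second application of Theorem~\ref{disc}; the constant $c=2+2\sqrt{3}$ is tuned precisely to make these two bounds collide. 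You instead apply Theorem~\ref{disc} exactly once, across the full bipartition $A\cup B$ of $V\setminus S$ where $e(A,B)=0$, and close the argument with only Whitney's inequality, the per-component size bound $a\ge\delta-\kappa+1$, and the identity $(a+\kappa)(b+\kappa)=ab+\kappa n$. Your route is shorter and self-contained (no appeal to Chung's Corollary~4), and it is in fact quantitatively stronger: your chain gives $\delta-\kappa\le\frac{4c^2}{c^2-1}\cdot\frac{\theta^2}{\delta}$, and since $\frac{4c^2}{c^2-1}\le c^2$ exactly when $c^2\ge 5$, the same argument with trivial-case threshold $\theta\ge\delta/\sqrt{5}$ proves the sharper bound $\kappa(G)\ge\delta-5\theta^2/\delta$; the generous constant $(2+2\sqrt{3})^2\approx 29.86$ is an artifact of the paper's two-sided squeeze, not of your method. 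Both arguments use the hypothesis $\delta\le n/2$ in an essential way (you use it via $n/(n-\kappa)\le 2$), and both degrade gracefully to the trivial regime $\theta\ge\delta/c$ where the claimed bound is nonpositive.
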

 \begin{proof}
 Let $c=2+2\sqrt{3}$. If $\theta\ge \frac{\delta}{c}$, there is
 nothing to show. We assume that $\theta<\frac{\delta}{c}$.

  Suppose that there exists a subset $S\subset V(G)$ with
  $|S|<\delta-\frac{(c\theta)^2}{\delta}$ such that the induced
  graph $G[V\setminus S]$ is disconnected. Denote by $U$ the set of
  vertices of the smallest connected component of $G[V\setminus
  S]$ and $W=V\setminus (S\bigcup U)$. Since the smallest degree
  of $G$ is $\delta$, $|S|+|U|>\delta$, which implies $|U|\ge
  \frac{(c\theta)^2}{\delta}$. Moreover,
  $|W|= n-(|U|+|S|)\le \frac{n-\delta}{2}\le \frac{n}{4}$.
  Because $U$ and $W$ are disjoint for two subsets of $G$, by
  \ref{disc}, we have
  $$\frac{d}{n}|U||W|\le
  \frac{\theta}{n}\sqrt{|U||W|(n-|U|)(n-|W|)}\le \sqrt{|U||W|}.$$
  Hence
  $$|U|\le \frac{\theta^2n^2}{d^2|W|}\le
  \frac{\theta}{d}\frac{n}{|W|}\frac{\theta
  n}{d}<\frac{4}{c}\frac{\theta n}{d}, $$
  since $\frac{\theta}{d}<\frac{\theta}{\delta}<\frac{1}{c}$.
  By using Corollary 4 in \cite{chung2004}, we have
   $$|2|e(U)|-\frac{d|U|(|U|-1)}{n}|\le
   \frac{2\theta}{n}|U|(n-\frac{|U|}{2}). $$
Then
\begin{eqnarray*}
 2|e(U)|&\le & 2\theta|U|+\frac{d}{n}|U|^2\\
 &\le & (2\theta+\frac{d}{n}\frac{4}{c}\frac{\theta n}{d})|U|\\
 &=&(2+\frac{4}{c})\theta |U|.
 \end{eqnarray*}
Hence, by $\theta<\frac{\delta}{c}$ and $c=2+2\sqrt{3},$
\begin{eqnarray*}
|e(U,S)|&\ge & \delta |U|-2|e(U)|\\
&\ge & (\delta-(2+\frac{4}{c})\theta)|U|\\
>(1-(2+\frac{4}{c}\frac{1}{c}))\delta |U|\\
>(\frac{1}{2}+\frac{1}{c})\delta |U|.
\end{eqnarray*}
On the other hand,  by \ref{disc} and $|S|\le \delta, $  $|U|\ge
\frac{c^2\theta^2}{\delta}$ and $\frac{d}{n}\le \frac{1}{2}$, we
have
\begin{eqnarray*}
|e(U,S)|&\le & \frac{d}{n}|U||S|+\theta\sqrt{|U||S|}\\
&\le & (\frac{d\delta}{n}+\theta
\frac{\delta\sqrt{\delta}}{c\theta})|U|\\
&=&(\frac{1}{2}+\frac{1}{c})\delta |U|.
\end{eqnarray*}
It is a contradiction. Therefore the result holds.
 \end{proof}
 \begin{corollary}
 \label{Krivvertex}
 (\cite{krivelevich2004}) Let $G$ be a $d-$regular graph of order
 $n$ with $d\le \frac{n}{2}$. Denote by  $\lambda$ the second largest
 absolute eigenvalue of $A(G)$. Then
 $$\kappa(G)\ge d-\frac{36\lambda^2}{d}.$$
 \end{corollary}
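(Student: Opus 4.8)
The plan is to read off the Corollary as the regular-graph specialization of Theorem~\ref{vertexcon}; all that is required is to translate the adjacency data into Laplacian data and then choose the discrepancy parameter $\theta$ correctly. First I would note that for a $d$-regular graph the minimum degree and the average degree both equal $d$, so the standing hypothesis $\delta\le n/2$ of Theorem~\ref{vertexcon} becomes exactly the assumption $d\le n/2$ made here, and the conclusion will be phrased with $\delta$ replaced by $d$.

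Next I would pass from $A(G)$ to $L(G)$. Since $G$ is $d$-regular, $D(G)=dI$ and hence $L(G)=dI-A(G)$. If the adjacency eigenvalues are $\mu_1\ge\mu_2\ge\cdots\ge\mu_n$ with $\mu_1=d$, then the Laplacian eigenvalues are $\sigma_i=d-\mu_{i+1}$ for $i=0,1,\dots,n-1$, so that $\sigma_0=d-\mu_1=0$ as it must. Consequently, for every $i\neq 0$ we have $|d-\sigma_i|=|\mu_{i+1}|$, and the maximum of these quantities is exactly the second largest absolute adjacency eigenvalue, namely $\max_{2\le j\le n}|\mu_j|=\lambda$. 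Thus the hypothesis $|d-\sigma_i|\le\theta$ of Theorem~\ref{vertexcon} holds with $\theta=\lambda$.

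Feeding $\delta=d$ and $\theta=\lambda$ into Theorem~\ref{vertexcon} gives $\kappa(G)\ge d-(2+2\sqrt{3})^2\lambda^2/d$. The only remaining point is the elementary numerical comparison $(2+2\sqrt{3})^2=16+8\sqrt{3}<36$; since the constant produced by the theorem is smaller, replacing it by $36$ only weakens the lower bound, so $\kappa(G)\ge d-36\lambda^2/d$ follows at once. I do not expect any genuine obstacle here: the corollary is a direct instantiation of the more general bound, and the only things to watch are the correct indexing of the spectral correspondence $\sigma_i=d-\mu_{i+1}$ and the trivial verification that $16+8\sqrt{3}$ does not exceed $36$.
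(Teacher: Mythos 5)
Your proposal is correct and follows essentially the same route as the paper: specialize Theorem~\ref{vertexcon} with $\delta=d$ and $\theta=\lambda$ via the spectral correspondence $L(G)=dI-A(G)$, then use $(2+2\sqrt{3})^2=16+8\sqrt{3}<36$ to pass to the stated constant. Your write-up is in fact slightly cleaner than the paper's, which contains a typo ($d^2$ where $\lambda^2$ is meant) in the final chain of inequalities.
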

 \begin{proof}
 Since $G$ is a $d-$regular graph, the eigenvalues of $A(G)$ are
 $d-\sigma_0$, $d-\sigma_1$,$\cdots,$  $d-\sigma_{n-1}$. Hence
 $\lambda$ satisfies $|d-\sigma_i|\le \lambda$ for $i\neq 0$. It
 follows from Theorem~\ref{vertexcon} that $\kappa(G)\ge
 d-\frac{(2+2\sqrt{3})^2d^2}{d}\ge d -\frac{36\lambda^2}{d}$.
 \end{proof}

  From \cite{krivelevich2004}, for a $d-$regular graph, the lower bound for
  $\kappa(G)$ in Corollary~\ref{Krivvertex} is tight up to a
  constant factor, which implies Theorem~\ref{vertexcon} is tight
  up to a constant factor.

 It is known that the edge connectivity $\kappa^{\prime}(G)$  of a
  graph $G$ is the minimum number of edges that need to delete  to
  make disconnected. In \cite{goldsmith1979}, Goldsmith and
  Entringer gave a sufficient condition for edge connectivity
  equal to the smallest degree. In here, we present also  a
  sufficient condition for edge connectivity equal to the smallest
  degree in terms of its Laplacian eigenvalues.

  \begin{theorem}
  \label{edgecon}
  Let $G$ be a graph of order $n$ with average degree $d$ and the
  smallest degree $\delta$. If the Laplacian eigenvalues satisfy
  $2\le \sigma_1\le \sigma_{n-1}\le 2d-2$, then
  $\kappa^{\prime}(G)=\delta$.
  \end{theorem}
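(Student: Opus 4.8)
The plan is to establish $\kappa'(G)\ge\delta$, since the reverse inequality $\kappa'(G)\le\delta$ is automatic: deleting the $\delta$ edges incident to a vertex of minimum degree disconnects $G$. So I would take an arbitrary edge cut, that is a partition of $V$ into two nonempty parts $S$ and $\bar S:=V\setminus S$, and show that the number $e(S,\bar S)$ of edges between them is at least $\delta$. By interchanging $S$ and $\bar S$ if necessary, I may assume $s:=|S|\le n/2$.

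Next I would cash in the eigenvalue hypothesis. Since every nontrivial Laplacian eigenvalue lies in $[2,2d-2]$, we have $|d-\sigma_i|\le d-2$ for $i\neq 0$, so Theorem~\ref{disc} applies with $\theta=d-2$. Applying it to the disjoint sets $X=S$ and $Y=\bar S$, the intersection terms vanish and $\sqrt{|X|(n-|X|)|Y|(n-|Y|)}=s(n-s)$, so the discrepancy inequality reduces to
$$e(S,\bar S)\ge\frac{d-\theta}{n}\,s(n-s)=\frac{2}{n}\,s(n-s).$$
Because $s\le n/2$ gives $n-s\ge n/2$, this spectral bound further simplifies to $e(S,\bar S)\ge\frac{2}{n}\,s\cdot\frac{n}{2}=s$.

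Now I would split on the size of $S$. If $s>\delta$, the spectral bound already gives $e(S,\bar S)\ge s>\delta$. If instead $1\le s\le\delta$, the spectral bound is too weak (at $s=1$ it yields only about $2$), so here I would switch to a purely combinatorial count: summing degrees over $S$ and bounding the number of edges inside $S$ by $\frac{s(s-1)}{2}$ gives
$$e(S,\bar S)\ge\delta s-s(s-1)=s(\delta-s+1).$$
As a function of $s$ this is a downward parabola equal to $\delta$ at both endpoints $s=1$ and $s=\delta$, hence at least $\delta$ on the whole interval $[1,\delta]$. In either case $e(S,\bar S)\ge\delta$, so $\kappa'(G)\ge\delta$ and equality follows.

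The conceptual point, and the only genuine obstacle, is that neither estimate works alone: the spectral discrepancy bound controls only reasonably balanced cuts and decays to a constant for thin cuts, while the degree count is meaningful only when $s\le\delta$. The proof works because the two regimes meet exactly at the threshold $s=\delta$ and therefore jointly cover every cut. Along the way one should note that the hypothesis forces $d\ge2$ (from $2\le\sigma_1\le\sigma_{n-1}\le 2d-2$), so that $\theta=d-2\ge0$ and the discrepancy inequality is legitimately applicable; this, together with the positivity $\delta\ge2$ (via Fiedler's $\sigma_1\le\kappa(G)\le\delta$), are routine consistency checks rather than substantive steps.
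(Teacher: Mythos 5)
Your proof is correct and follows essentially the same route as the paper: split on whether the smaller side of the cut has size at most $\delta$ (handled by the degree count $e(S,\bar S)\ge s(\delta-s+1)\ge\delta$, which is exactly the paper's bound $|U|(\delta-|U|+1)$) or larger (handled by applying Theorem~\ref{disc} with $\theta=d-2$ to get $e(S,\bar S)\ge\frac{2}{n}s(n-s)\ge\delta$). Your added consistency checks ($\theta\ge 0$, the parabola argument made explicit) are sound refinements of the same argument.
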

  \begin{proof}
  Let $U$ be a subset of vertices of $G$ with $|U|\le
  \frac{n}{2}$.

  If $1\le |U|\le \delta$, then for every vertex $u\in U$, $u$ is
adjacent to at least $\delta-|U|+1$ vertices in $G\setminus U$.
Therefore,
$$|e(U,G\setminus U)|\ge |U|(\delta-|U|+1)\ge \delta.$$

  If $\delta<|U|\le \frac{n}{2}$, let $\theta=d-2$. Since
   $2\le \sigma_1\le \sigma_{n-1}\le 2d-2$,
  $|d-\sigma_i|\le \theta$ for $i\neq 0$. By Theorem~\ref{disc},
  $$||e(U, V\setminus U)|-\frac{d}{n}|U||V\setminus U||\le
  \frac{\theta}{n}|U|(n-|U|).$$
  Thus,
  $$|e(U, V\setminus U)|\ge \frac{d-\theta}{n}|U|(n-|U|)\ge
\frac{d-\theta}{n}\delta(n-\delta)\ge
\frac{2\delta(n-\delta)}{n}\ge \delta.$$ Hence there are always at
least $\delta$ edges between $U$ and $V\setminus U$. Therefore
$\kappa^{\prime}(G)=\delta$.
\end{proof}

\section{Hamiltonicity and the chromatic number}
 In this section, we first give an upper bound for the
 independence number $\alpha(G)$, which is used to present a sufficient
 condition for a graph to have a Hamilton cycle. Moreover,  a
 lower bound for the chromatic number of a graph is obtained.
The independence number is the maximum cardinality of a set of
vertices of $G$ no two of which are adjacent.

 \begin{lemma}
 \label{independ}
  Let $G$ be a graph of order $n$ with average
 $d$. If the Lapalcian eigenvalues satisfies $|d-\sigma_i|\le
 \theta$ for $i\neq 0$, then
 $$\alpha(G)\le \frac{2n\theta+d}{d+\theta}.$$
 \end{lemma}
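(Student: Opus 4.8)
The plan is to bound the independence number using the discrepancy inequality (Theorem~\ref{disc}) applied to a single independent set. Let $X$ be an independent set of $G$ with $|X|=\alpha=\alpha(G)$. The key observation is that since $X$ is independent, there are no edges inside $X$, so $e(X,X)=0$. I would apply Theorem~\ref{disc} with $Y=X$ (so that $X$ and $Y$ coincide), which forces $X\cap Y=X$ and turns the left-hand side into an expression involving only $|X|$ and the volume of $X$.

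Carrying this out: with $Y=X$ the inequality reads
$$\left|e(X,X)-\frac{d}{n}|X|^2+d|X|-{\rm vol}(X)\right|\le\frac{\theta}{n}\sqrt{|X|(n-|X|)|X|(n-|X|)}=\frac{\theta}{n}|X|(n-|X|).$$
Since $e(X,X)=0$ and ${\rm vol}(X)=\sum_{v\in X}d_v\ge 0$, the term $-{\rm vol}(X)$ can be controlled. The main idea is that dropping $e(X,X)$ and using a suitable estimate for ${\rm vol}(X)$ should leave a clean inequality of the form $d|X|-(\text{something})\le\frac{\theta}{n}|X|(n-|X|)$, from which one solves for $|X|=\alpha$ to obtain the claimed bound $\alpha\le\frac{2n\theta+d}{d+\theta}$.

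The step I expect to be the main obstacle is handling the volume term ${\rm vol}(X)$ correctly, since it does not simplify to a multiple of $|X|$ in general and its sign inside the absolute value must be tracked carefully. One natural route is to bound ${\rm vol}(X)$ crudely — for instance noting ${\rm vol}(X)\le$ (maximum degree)$\cdot|X|$ or relating it to $d|X|$ plus a discrepancy — but a cleaner approach is to apply Corollary~4 of \cite{chung2004} (the edge-count discrepancy bound already invoked in the proof of Theorem~\ref{vertexcon}) directly to the independent set $X$, since for an independent set $e(X)=0$ forces a strong constraint. I would reconcile the two absolute-value estimates, extract the inequality bounding $d+\theta$ times $|X|$ from above by a multiple of $n\theta+d$, and then rearrange to isolate $\alpha$.

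Once the governing inequality is in the form $(d+\theta)\alpha\le 2n\theta+d$, the conclusion $\alpha\le\frac{2n\theta+d}{d+\theta}$ is immediate by division, so the real content lies entirely in setting up the discrepancy inequality on the diagonal case $X=Y$ and correctly accounting for the volume correction term; I do not anticipate difficulty in the final algebraic rearrangement.
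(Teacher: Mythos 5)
Your final route---applying Corollary~4 of \cite{chung2004} to the independent set $X$, using $e(X)=0$ to get $\frac{d|X|(|X|-1)}{n}\le\frac{2\theta}{n}|X|\bigl(n-\frac{|X|}{2}\bigr)$ and rearranging to $(d+\theta)|X|\le 2n\theta+d$---is exactly the paper's proof. The initial detour through Theorem~\ref{disc} with $Y=X$ (and its volume-term obstacle) is unnecessary, and you correctly identified the cleaner argument that the paper in fact uses.
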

\begin{proof}
Let $U$ be an independent set with the seize $\alpha(G)$. By
Corollary~4 in \cite{chung2004}, we have
$$|2|e(U)|-\frac{d|U|(|U|-1)}{n}|\le
\frac{2\theta}{n}|U|(n-\frac{|U|}{2}).$$ Hence
$|U|\le\frac{2n\theta+d}{d+\theta}$.
\end{proof}
\begin{lemma}
\label{chvatal} \cite{chvatal} Let $G$ be a graph. If the vertex
connectivity of $G$ is at least as large as its independence
number, then $G$ is Hamiltonian.
\end{lemma}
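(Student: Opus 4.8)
The plan is to prove this by the classical longest-cycle argument of Chv\'atal and Erd\H{o}s. Write $\kappa=\kappa(G)$ and $\alpha=\alpha(G)$, and assume $\kappa\ge\alpha$. First I would dispose of the degenerate case: if $\alpha=1$ then $G$ is complete and hence Hamiltonian (for $n\ge 3$), so I may assume $\alpha\ge 2$, which forces $\kappa\ge 2$. In particular $G$ is $2$-connected and therefore contains a cycle; let $C$ be a cycle of maximum length, with vertices $v_1v_2\cdots v_kv_1$ listed in a fixed cyclic orientation. The goal is to show $V(C)=V(G)$, so suppose for contradiction that $V(G)\setminus V(C)\ne\emptyset$, and fix a connected component $H$ of $G-V(C)$ together with a vertex $u\in V(H)$.

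Next I would record the attachment set $A=N_C(H)$, the set of vertices of $C$ having a neighbour in $H$, and for each $v_i\in A$ write $v_i^{+}=v_{i+1}$ for its successor along the orientation of $C$. The crux is to show that $S=\{v_i^{+}:v_i\in A\}\cup\{u\}$ is an independent set. Granting this, the key counting goes as follows: since $H$ is a component of $G-V(C)$, the only neighbours of $V(H)$ outside $H$ lie in $A$, so $A$ separates $V(H)$ from $V(C)\setminus A$; as this latter set is nonempty (the successors are not in $A$, shown below), $A$ is a vertex cut and $|A|\ge\kappa$. Because the map $v_i\mapsto v_i^{+}$ is injective on $C$ and $u\notin V(C)$, this gives $|S|=|A|+1\ge\kappa+1>\alpha$, contradicting the maximality of $\alpha$ and finishing the proof.

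The independence of $S$ splits into two claims resting on the maximality of $C$, and establishing the second is the main obstacle. The first claim is that no successor $v_i^{+}$ has a neighbour in $H$ (so in particular $v_i^{+}\notin A$): otherwise, if $v_i$ reaches $H$ at $a$ and $v_i^{+}$ reaches $H$ at $b$, one chooses an $a$--$b$ path inside the connected graph $H$ and reroutes $C$ through it in place of the edge $v_iv_i^{+}$, producing a strictly longer cycle. The second, harder claim is that two distinct successors $v_i^{+},v_j^{+}$ (say $i<j$, attaching to $H$ at $a,b$) are never adjacent. Here I would use a crossing-chords rearrangement: assuming the edge $v_i^{+}v_j^{+}$ exists, traverse $v_i\to a\rightsquigarrow b\to v_j$, then reverse the arc $v_j\to v_{j-1}\to\cdots\to v_i^{+}$, splice in the edge $v_i^{+}v_j^{+}$, and close along the arc $v_j^{+}\to\cdots\to v_i$. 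One checks the two arcs partition $V(C)$ and that the $H$-path contributes at least one new vertex, so the result is a single vertex-disjoint cycle longer than $C$, the desired contradiction. Verifying that this reassembly is genuinely a cycle is the most delicate bookkeeping, but it is routine once the orientations of the two arcs of $C$ are tracked carefully; combining the two claims with $v_i^{+}\notin A$ then yields the independence of $S$.
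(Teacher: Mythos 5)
The paper states this lemma without proof, simply citing the Chv\'atal--Erd\H{o}s theorem, so there is no in-paper argument to compare against; your proof is the classical longest-cycle argument from that original source. It is correct as written: the successor set $S$ is shown independent via the two maximality claims (including the crossing-chord splice, whose arc bookkeeping you handle properly), $A$ is a genuine cut since $A\neq\emptyset$ by connectivity and the successors witness $V(C)\setminus A\neq\emptyset$, and $|S|=|A|+1\ge\kappa+1>\alpha$ gives the contradiction, with the degenerate case $\alpha=1$ correctly dispatched for $n\ge 3$.
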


 \begin{theorem}
\label{Hamilton1} Let $G$ be a  graph  of order $n$ with average
$d$ and the smallest degree $\delta$. If the Laplacian eigenvalues
satisfies
 $|d-\sigma_i|\le
 \theta$ for $i\neq 0$ and $\delta-(2+2\sqrt{3})^2\frac{\theta^2}{\delta}\ge \frac{2n\theta+d}{d+\theta}
$, then $G$ is Hamiltonian.
\end{theorem}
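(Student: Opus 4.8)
The plan is to chain together the three ingredients already assembled: the lower bound on the vertex connectivity from Theorem~\ref{vertexcon}, the upper bound on the independence number from Lemma~\ref{independ}, and the Chv\'atal--Erd\H{o}s criterion recorded as Lemma~\ref{chvatal}. The hypothesis of the theorem is engineered precisely so that the connectivity lower bound dominates the independence-number upper bound, which is exactly the inequality $\kappa(G)\ge\alpha(G)$ needed to invoke Hamiltonicity.

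First I would dispose of the case $\delta>\frac{n}{2}$. Here Theorem~\ref{vertexcon} does not directly apply, since it carries the side condition $\delta\le\frac{n}{2}$; but in this regime the classical theorem of Dirac already guarantees that $G$ is Hamiltonian (for $n\ge 3$), so nothing further is required. Hence I may assume $\delta\le\frac{n}{2}$, which places us squarely within the hypotheses of Theorem~\ref{vertexcon}.

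Next, applying Theorem~\ref{vertexcon} gives
$$\kappa(G)\ge\delta-(2+2\sqrt{3})^2\frac{\theta^2}{\delta},$$
while Lemma~\ref{independ} gives
$$\alpha(G)\le\frac{2n\theta+d}{d+\theta}.$$
The standing assumption of the present theorem is exactly
$$\delta-(2+2\sqrt{3})^2\frac{\theta^2}{\delta}\ge\frac{2n\theta+d}{d+\theta},$$
so stringing these three inequalities together transitively yields $\kappa(G)\ge\alpha(G)$. Lemma~\ref{chvatal} then immediately delivers that $G$ is Hamiltonian, completing the argument.

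I expect no serious obstacle here: the entire proof is a transitive chain of inequalities resting on results proved earlier in the paper. The only point genuinely requiring care is the side condition $\delta\le\frac{n}{2}$ that is built into Theorem~\ref{vertexcon} but is absent from the statement of this theorem, which is why the preliminary reduction via Dirac's theorem is worth recording explicitly rather than glossing over. One should also implicitly take $n\ge 3$ and $d+\theta>0$ so that both the Chv\'atal--Erd\H{o}s and Dirac criteria are meaningful.
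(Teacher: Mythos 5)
Your proposal is correct and follows essentially the same route as the paper: chaining Theorem~\ref{vertexcon}, Lemma~\ref{independ}, and the Chv\'atal--Erd\H{o}s criterion (Lemma~\ref{chvatal}) to get $\kappa(G)\ge\alpha(G)$. In fact you are slightly more careful than the paper itself, which silently applies Theorem~\ref{vertexcon} without addressing its side condition $\delta\le\frac{n}{2}$; your preliminary reduction via Dirac's theorem for $\delta>\frac{n}{2}$ patches that gap cleanly.
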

\begin{proof}
By Theorem~\ref{vertexcon}, $G$ has at least
$\delta-(2+2\sqrt{3})^2\frac{\theta^2}{\delta}$ vertex connected.
On the other hand, by
 Lemma~\ref{independ}, the independence number of $G$ is at most
 $\frac{2n\theta+d}{d+\theta}$. It follows from Lemma~\ref{chvatal} that $G$ is Hamiltonian.
 \end{proof}

\begin{theorem}
\label{hamilton2} Let $G$ be a  connected graph of order $n$ with
the smallest degree $\delta$. If $\sigma_1\ge
\frac{\sigma_{n-1}-\delta}{\sigma_{n-1}}n$, then $G$ is
Hamiltonian.
\end{theorem}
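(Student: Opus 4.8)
The plan is to reduce everything to the Chvátal--Erd\H{o}s condition in Lemma~\ref{chvatal}: it suffices to show that $\kappa(G)\ge \alpha(G)$. The shape of the hypothesis, $\sigma_1\ge \frac{\sigma_{n-1}-\delta}{\sigma_{n-1}}n$, all but dictates the two estimates that must be chained together: bound the vertex connectivity from below by $\sigma_1$, bound the independence number from above by $\frac{\sigma_{n-1}-\delta}{\sigma_{n-1}}n$, and let the assumed inequality close the gap between them. First I would dispose of the complete graph: $K_n$ is Hamiltonian for $n\ge 3$, so I may assume $G\ne K_n$. Then Fiedler's inequality (recalled at the start of Section~2) applies and gives $\kappa(G)\ge \sigma_1$.

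The main work is the upper bound on $\alpha(G)$, which I would derive by a Rayleigh-quotient argument. Let $S$ be a maximum independent set, let $x$ be its $0$--$1$ indicator vector, and let $j$ be the all-ones vector, which spans the kernel of $L(G)$ (the eigenspace for $\sigma_0=0$). Since $S$ has no internal edges, every edge meeting $S$ leaves $S$, so $x^{T}L(G)x=\sum_{\{u,v\}\in E}(x_u-x_v)^2={\rm vol}(S)\ge \delta\,\alpha(G)$. Writing $x$ as its projection onto $j$ plus a component $x'$ orthogonal to $j$, one computes $\|x'\|^2=\alpha(G)\bigl(1-\alpha(G)/n\bigr)$, and since $x^{T}L(G)x=x'^{T}L(G)x'\le \sigma_{n-1}\|x'\|^2$, we obtain $\delta\,\alpha(G)\le \sigma_{n-1}\alpha(G)\bigl(1-\alpha(G)/n\bigr)$. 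Cancelling the factor $\alpha(G)\ge 1$ and rearranging yields $\alpha(G)\le \frac{\sigma_{n-1}-\delta}{\sigma_{n-1}}n$.

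Chaining the three facts then finishes the proof: $\kappa(G)\ge \sigma_1\ge \frac{\sigma_{n-1}-\delta}{\sigma_{n-1}}n\ge \alpha(G)$, where the middle inequality is precisely the hypothesis. Hence $\kappa(G)\ge\alpha(G)$, and Lemma~\ref{chvatal} shows that $G$ is Hamiltonian.

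I expect the substance of the argument to be entirely in the independence-number estimate, but it is the routine spectral computation rather than a genuine obstacle; the eigenvalue inequality is just the standard bound by the largest eigenvalue on the orthogonal complement of $j$. The points that actually require a little care are the boundary cases: excluding $K_n$ so that Fiedler's bound is available, ensuring $\alpha(G)\ge 1$ before cancelling, and keeping $n\ge 3$ so that Hamiltonicity and the Chvátal--Erd\H{o}s hypothesis are meaningful.
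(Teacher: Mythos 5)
Your proposal is correct and follows exactly the paper's route: Fiedler's bound $\kappa(G)\ge\sigma_1$, the bound $\alpha(G)\le\frac{\sigma_{n-1}-\delta}{\sigma_{n-1}}n$, and the Chv\'atal--Erd\H{o}s condition of Lemma~\ref{chvatal}. The only difference is cosmetic: where the paper cites Corollary~3.3 of \cite{zhang2004} for the independence-number bound, you prove it inline by the standard Rayleigh-quotient argument (correctly), and you are in fact more careful than the paper in excluding $K_n$ before invoking Fiedler and in noting $\alpha(G)\ge 1$ before cancelling.
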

\begin{proof}
By a theorem in \cite{fiedler}, $\kappa(G)\ge \sigma_1$. On the
other hand, by Corollary~3.3 in \cite{zhang2004}, the independence
number $\alpha(G)\le \frac{\sigma_{n-1}-\delta}{\sigma_{n-1}}n$.
It follows from Lemma~\ref{chvatal} that $G$ is Hamiltonian.
\end{proof}

 The proper coloring of the vertices of $G$  is an assignment  of
 colors to the vertices in such a way that adjacent  vertices have
 distinct colors.  The chromatic number, denoted by $\chi(G)$, is the
 minimal number od colors in a vertex coloring of $G$.

\begin{theorem}
\label{Chromatic} Let $G$ be a graph of order $n$ with the
smallest degree $\delta\ge 1$. Then
$$\chi(G)\ge
\frac{\sigma_{n-1}}{\sigma_{n-1}-\delta}.$$ Moreover, if $G$ is a
$d-$
 regular bipartite graph, or a complete $r-$partite graph
 $K_{s,s,\cdots, s}$, then equality holds.
 \end{theorem}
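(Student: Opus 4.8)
The plan is to obtain the lower bound directly from the upper bound on the independence number already invoked in the proof of Theorem~\ref{hamilton2}, namely $\alpha(G)\le \frac{\sigma_{n-1}-\delta}{\sigma_{n-1}}n$ (Corollary~3.3 of \cite{zhang2004}), and then to establish equality in the two listed families by a direct spectral computation of $\sigma_{n-1}$.

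For the inequality I would begin with a proper coloring of $G$ using $\chi(G)=k$ colors. Such a coloring partitions $V(G)$ into $k$ color classes $V_1,\dots,V_k$, each an independent set, so $|V_i|\le \alpha(G)$ for every $i$, and summing gives $n=\sum_{i=1}^{k}|V_i|\le k\,\alpha(G)$. Since $\delta\ge 1$ forces $G$ to contain an edge, the largest Laplacian eigenvalue satisfies $\sigma_{n-1}\ge \Delta+1>\delta$, so $\sigma_{n-1}-\delta>0$ and the independence bound may be substituted to give $n\le k\,\frac{\sigma_{n-1}-\delta}{\sigma_{n-1}}n$. Rearranging yields $\chi(G)=k\ge \frac{\sigma_{n-1}}{\sigma_{n-1}-\delta}$, as required.

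For the equality statements I would compute $\sigma_{n-1}$ in each case. If $G$ is $d$-regular bipartite with $d\ge 1$, then $\chi(G)=2$ and $L(G)=dI-A(G)$; because the adjacency spectrum of a bipartite graph is symmetric about $0$, the smallest adjacency eigenvalue is $-d$, whence $\sigma_{n-1}=d-(-d)=2d$ while $\delta=d$, and $\frac{\sigma_{n-1}}{\sigma_{n-1}-\delta}=\frac{2d}{d}=2=\chi(G)$. For $G=K_{s,s,\dots,s}$ with $r$ parts I would write $A(G)=J_n-(I_r\otimes J_s)$, so that $L(G)=(n-s)I_n-J_n+(I_r\otimes J_s)$ with $n=rs$; diagonalizing on the three natural invariant subspaces (the all-ones vector, the part-constant zero-sum vectors, and the vectors summing to zero on each part) produces Laplacian eigenvalues $0$, the value $n$ with multiplicity $r-1$, and the value $n-s$ with multiplicity $n-r$. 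Hence $\sigma_{n-1}=n=rs$, while $\delta=(r-1)s$ and $\chi(G)=r$, giving $\frac{\sigma_{n-1}}{\sigma_{n-1}-\delta}=\frac{rs}{rs-(r-1)s}=r$.

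The inequality itself is essentially immediate once the independence bound is in hand; the bulk of the work, and the part I expect to require the most care, is the spectral computation for the complete multipartite graph, in particular verifying that $n$ rather than $n-s$ sits at the top of the spectrum. This is exactly where the hypothesis $\delta\ge 1$ enters: it forces $r\ge 2$, so the eigenvalue $n$ occurs with positive multiplicity $r-1$ and dominates $n-s$.
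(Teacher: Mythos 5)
Your proof is correct, but it takes a genuinely different route from the paper. The paper proves the inequality with a self-contained interlacing argument: it forms the quotient matrix $B=S^{T}L(G)S$ from the (normalized) characteristic vectors of the color classes, notes $b_{ii}\ge\delta$ and, by eigenvalue interlacing, that $B$ has smallest eigenvalue $0$ and all others at most $\sigma_{n-1}$, whence $\delta\chi\le\mathrm{tr}\,B\le(\chi-1)\sigma_{n-1}$. You instead combine the pigeonhole inequality $n\le\chi(G)\alpha(G)$ with the Hoffman-type ratio bound $\alpha(G)\le\frac{\sigma_{n-1}-\delta}{\sigma_{n-1}}n$ (Corollary~3.3 of \cite{zhang2004}), the same external tool the paper already invokes in Theorem~\ref{hamilton2}; this buys brevity and unifies the section's toolkit, at the cost of outsourcing the spectral content to the cited result, whereas the paper's trace argument is self-contained modulo interlacing (and, via $b_{ii}\ge\delta$, uses only the average degree of each color class rather than bounding each class by the maximum independent set). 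Your handling of the equality cases is also more careful than the paper's: your three-subspace diagonalization of $L(K_{s,\dots,s})=(n-s)I_n-J_n+(I_r\otimes J_s)$ correctly yields $\sigma_{n-1}=n=rs$ with multiplicity $r-1$ (the paper's stated value $\sigma_{n-1}=\frac{r}{r-1}s$ is evidently a typo for $\frac{r}{r-1}\delta=rs$), and your explicit check that $\delta\ge1$ forces $\sigma_{n-1}\ge\Delta+1>\delta$ (so the denominator is positive) and forces $r\ge2$ (so the eigenvalue $n$ actually occurs) fills small gaps the paper passes over silently, including its unstated bipartiteness assumption in the $d$-regular case.
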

 \begin{proof}
 Let $V_1,V_2,\cdots, V_{\chi}$ denote the color class of $G$.
 Denote  by $e$ the vector with all component equal to $1$. Let
 $s_i$ be the restriction  vector of $\frac{1}{|V_i|}e$ to $V_i$;
 that is,
 $(s_i)_j=\frac{1}{|V_i|}$, if $j\in V_i$ ; $(s_i)_j=0$,
 otherwise. Thus $S=(s_1,\cdots, s_{\chi})$ is an $n\times \chi$
 matrix and $S^TS=I_n$. Let $B=S^TL(G)S=(b_{ij})$ and its
 eigenvalues $\mu_0\le \mu_1\le\cdots\le \mu_{\chi -1}$.
 By eigenvalue interlacing, it is easy to see that $\mu_0=0$ and
 $\mu_{\chi-1}\le \sigma_{n-1}$. Moreover,
 $b_{ii}=\frac{1}{|V_i|}\sum_{v\in V_i}d_v\ge \delta$. Hence
 $$\delta\chi\le {\rm tr}B= \mu_0+\cdots \mu_{\chi-1}\le
 (\chi-1)\sigma_{n-1},$$
which yields the desired inequality.  If $G$ is a $d-$ regular
graph, then $\chi=2$, $\delta=d$ and $\sigma_{n-1}=2d$. So
equality holds. If $G$ is a complete $r-$partite graph, then
$\chi=r$, $\delta=(r-1)s$ and $\sigma_{n-1}=\frac{r}{r-1}s$. Hence
equality holds.\end{proof}

\section{Forwarding indices of  graphs}
In this section, we discuss some relations between the Laplacian
eigenvalues of a graph and its forwarding indices.

 A routing $R$ of a graph $G$ of order $n$ is a set of $n(n-1)$
 paths specified for all ordered pairs $u$ and $v$ of vertices of
 $G$. Denote $\xi(G,R, v)$ by the number of paths of $R$ going
 through $v$ (where $v$ is not an end vertex). The vertex
 forwarding index of $G$ is defined to be
 $$\xi(G)=\min_{R}\max_{v\in V(V)} \xi(G,R,v).$$
 Denote  $\pi(G,R, e)$ by the number of paths of $R$ going through
 edge $e$. The edge forwarding index of $G$ is defined to be
 $$\pi(G)=\min_{R}\max_{e\in E(G)}\pi(G,R, e).$$
 Let $X$ be a proper subset of $V$. The vertex cut induced by $X$
 is $N(X)=\{y\in V\setminus X | \{x,y\}\in E(G)\}$. Moreover,
 denote $X^+$ by the complement of $X\bigcup N(X)$ in $V$. The
 {\it vertex expanding factor} is defined by
 $$\gamma(G)=\min\{\frac{|N(X)|}{|X||X^+|}\ | \ X\subseteq V, 1\le
 |X|\le n-1, |X^+|\ge 1\}, $$
 where the min on a void set of $X$ is taken to be infinite.

 \begin{theorem}
 \label{verforward}
 Let $G$ be a graph of order $n$ with average degree $d$. If the
 Laplacian eigenvalues satisfies $|d-\sigma_i|\le \theta$ for
 $i\neq 0$, then
 $$\gamma(G)\ge \frac{d^2-\theta^2}{n\theta^2}.$$
 \end{theorem}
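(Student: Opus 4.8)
The plan is to specialize the discrepancy inequality (Theorem~\ref{disc}) to the pair of sets $X$ and $X^+$, whose defining feature is that no edge runs between them. I would first fix an admissible $X$ and record the partition of the vertex set into the three blocks $X$, $N(X)$ and $X^+$; writing $a=|X|$, $b=|X^+|$ and $c=|N(X)|$, these satisfy $a+b+c=n$. By the very definition of $N(X)$, every neighbour of a vertex of $X$ lies in $X\cup N(X)$, so there is no edge between $X$ and $X^+$; hence $e(X,X^+)=0$. Moreover $X\cap X^+=\emptyset$, so both $|X\cap X^+|$ and $\mathrm{vol}(X\cap X^+)$ vanish.

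With these vanishing terms, applying Theorem~\ref{disc} with $Y=X^+$ collapses the left-hand side to the single term $\frac{d}{n}|X||X^+|$, giving
$$\frac{d}{n}\,ab \le \frac{\theta}{n}\sqrt{a(n-a)\,b(n-b)}.$$
Squaring both sides and clearing the common factor $1/n^{2}$ yields $d^{2}ab \le \theta^{2}(n-a)(n-b)$. The key simplification I would then use is the elementary identity $(n-a)(n-b)=(b+c)(a+c)=ab+c(a+b+c)=ab+cn$, which holds precisely because $a+b+c=n$ and is exactly what converts the square-root discrepancy bound into a clean linear lower bound on $|N(X)|$.

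Substituting the identity gives $d^{2}ab \le \theta^{2}(ab+cn)$, hence $(d^{2}-\theta^{2})\,ab \le \theta^{2}n\,c$. Dividing through by the positive quantity $\theta^{2}n\,ab$ produces
$$\frac{|N(X)|}{|X||X^+|}=\frac{c}{ab} \ge \frac{d^{2}-\theta^{2}}{n\theta^{2}}.$$
Since this holds for every admissible $X$, taking the minimum over all such $X$ delivers the claimed bound on $\gamma(G)$.

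I do not expect a serious obstacle here, as the whole argument is a direct reading of Theorem~\ref{disc}. The only points requiring care are the bookkeeping around the three-block partition and the vanishing of $e(X,X^+)$, and the degenerate regime $d\le\theta$, in which the right-hand side $\frac{d^{2}-\theta^{2}}{n\theta^{2}}$ is nonpositive and the inequality holds trivially because $\gamma(G)\ge 0$. The genuinely useful step is the collapse $(n-a)(n-b)=ab+cn$, which linearizes the bound; everything else is routine manipulation.
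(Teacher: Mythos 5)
Your proof is correct and is essentially the paper's own argument: the paper likewise applies Theorem~\ref{disc} to $U$ and $W=U^{+}$, uses $e(U,W)=0$ to collapse the left side to $\frac{d}{n}|U||W|$, squares, and exploits the three-block partition identity $(n-|U|)(n-|W|)=|U||W|+n|N(U)|$ to obtain $(d^{2}-\theta^{2})|U||W|\le\theta^{2}n|N(U)|$. If anything, your bookkeeping is tidier than the paper's, which contains two evident typos ($N(W)$ where $N(U)$ is meant, and $n-|W|$ where $|W|$ is meant), and your explicit remark on the degenerate regime $d\le\theta$ is a small improvement the paper omits.
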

 \begin{proof}
 Let $U$ be a subset of $G$ such that
 $$\gamma(G)=\frac{|N(U)|}{|U||U^+|},\ \ 1\le |U|\le n-1,\ \
 |U^+|\ge 1.$$
 Set $W=V\setminus (U\bigcup N(U))$. By Theorem~\ref{disc}, we
 have
 $$||e(U,W)|-\frac{d}{n}|U||W||\le
 \frac{\theta}{n}\sqrt{|U|(n-|U|)|W|(n-|W|)}.$$
 Hence
 $$ d^2|U||W|\le \theta^2(|U|+|N(U)|)(|W|+|N(W)|).$$
 Then
 $$\frac{|N(U)|}{|U||U^+|}=\frac{|N(U)|}{|U|(n-|W|)}\ge
 \frac{d^2-\theta^2}{n\theta^2}.$$
 We complete the proof.
\end{proof}

\begin{theorem}
\label{edgeindex}
 Let $G$ be a graph of order $n$. If $\sigma_1\le \frac{1}{2}$,
 then $\xi(G)\ge \sqrt{\frac{1-2\sigma_1}{\sigma_1}}$.
 \end{theorem}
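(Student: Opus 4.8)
The plan is to route the estimate through the vertex expanding factor $\gamma(G)$ introduced above, and then to manufacture a sparse vertex separator out of a Fiedler eigenvector for $\sigma_{1}$.

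First I would prove the purely combinatorial inequality $\xi(G)\ge 1/\gamma(G)$. Fix any routing $R$ and any proper vertex subset $X$ with $1\le|X|\le n-1$ and $X^{+}\neq\emptyset$. Each of the $|X||X^{+}|$ ordered pairs $(u,w)$ with $u\in X$ and $w\in X^{+}$ is joined in $R$ by a path, and since $N(X)$ separates $X$ from $X^{+}$, every such path must use at least one \emph{internal} vertex of $N(X)$. Hence $\sum_{v\in N(X)}\xi(G,R,v)\ge |X||X^{+}|$, so some $v\in N(X)$ satisfies $\xi(G,R,v)\ge |X||X^{+}|/|N(X)|$. Minimising over $R$ and then choosing the best $X$ gives $\xi(G)\ge \max_{X}\frac{|X||X^{+}|}{|N(X)|}=\frac{1}{\gamma(G)}$. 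It therefore suffices to exhibit a \emph{single} set $X$ with $\frac{|N(X)|}{|X||X^{+}|}\le\sqrt{\frac{\sigma_{1}}{1-2\sigma_{1}}}$, since then $\xi(G)\ge 1/\gamma(G)\ge\sqrt{(1-2\sigma_{1})/\sigma_{1}}$.

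The set $X$ should be a threshold set of the eigenvector. Let $g$ be a unit eigenvector for $\sigma_{1}$, so $\sum_{v}g_{v}=0$, $\sum_{v}g_{v}^{2}=1$, and $\sigma_{1}=\sum_{\{u,v\}\in E}(g_{u}-g_{v})^{2}$. For a threshold $t$ set $A_{t}=\{v:g_{v}\le t\}$; then $N(A_{t})$ and $A_{t}^{+}$ automatically form a legal separation, so $\gamma(G)\le |N(A_{t})|/(|A_{t}||A_{t}^{+}|)$ for every $t$ with $A_{t}$ and $A_{t}^{+}$ nonempty. A vertex $w$ lies in $N(A_{t})$ exactly when $\min_{u\sim w}g_{u}\le t<g_{w}$, so the co-area identity $\int_{-\infty}^{\infty}|N(A_{t})|\,dt=\sum_{w}\big(g_{w}-\min_{u\sim w}g_{u}\big)^{+}$ holds. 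Bounding each summand by $\big(\sum_{u\sim w}(g_{w}-g_{u})^{2}\big)^{1/2}$ and applying Cauchy--Schwarz over the $n$ vertices yields $\int_{-\infty}^{\infty}|N(A_{t})|\,dt\le\sqrt{2n\sigma_{1}}$. For the denominator, the rearrangement identity $\int_{-\infty}^{\infty}|A_{t}|\,(n-|A_{t}|)\,dt=\sum_{i<j}|g_{i}-g_{j}|$, together with $\sum_{i<j}(g_{i}-g_{j})^{2}=n$ and $\max_{i,j}|g_{i}-g_{j}|\le 2$, gives $\int_{-\infty}^{\infty}|A_{t}|\,(n-|A_{t}|)\,dt\ge n/2$.

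Finally I would combine the two integral estimates, using that a minimum of a ratio is at most the ratio of the integrals, to select a threshold $t$ for which $\frac{|N(A_{t})|}{|A_{t}||A_{t}^{+}|}$ is small; writing $|A_{t}^{+}|=(n-|A_{t}|)-|N(A_{t})|$ converts the $(n-|A_{t}|)$ factor into the genuine far-set size $|A_{t}^{+}|$ at the cost of a lower-order correction. The \textbf{main obstacle} is exactly this last step: the denominator is a \emph{product} $|X||X^{+}|$ rather than a single boundary quantity, so one cannot literally divide the numerator integral by the denominator integral, and it is the correction from subtracting $|N(A_{t})|$ that produces the factor $1-2\sigma_{1}$ and pins down the constant. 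Controlling that correction---most cleanly by restricting the threshold to the range in which $A_{t}$ and $A_{t}^{+}$ are each a constant fraction of $V$ and then optimising the resulting bound---is where the real work lies; by comparison the reduction to $\gamma(G)$ and the two integral estimates are routine.
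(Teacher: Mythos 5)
Your reduction of $\xi(G)$ to the vertex expanding factor is the same first step as the paper's, except that you count only the $|X||X^{+}|$ ordered pairs from $X$ to $X^{+}$; the paper counts both orientations, getting $\xi(G)\ge 2|U||U^{+}|/|N(U)|=2/\gamma(G)$, and that factor $2$ is exactly what is needed to land on $\sqrt{(1-2\sigma_{1})/\sigma_{1}}$ after the spectral step. For the spectral step the paper does no eigenvector work at all: it quotes Alon's Lemma~2.4, $\sigma_{1}\ge c^{2}/(4+2c^{2})$ where $c$ is the vertex expansion constant over sets of size at most $n/2$, inverts it to $c\le 2\sqrt{\sigma_{1}/(1-2\sigma_{1})}$, and observes $\gamma(G)\le c$. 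You instead set out to reprove this vertex-expansion statement from scratch by thresholding a Fiedler vector; your two co-area identities and the estimates $\int|N(A_{t})|\,dt\le\sqrt{2n\sigma_{1}}$ and $\int|A_{t}|(n-|A_{t}|)\,dt\ge n/2$ are all correct as far as they go.

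The proposal nevertheless has a genuine gap, and it sits exactly where you flag it. The threshold $t$ produced by comparing the two integrals controls $|N(A_{t})|/\bigl(|A_{t}|(n-|A_{t}|)\bigr)$, but nothing you have proved prevents $|N(A_{t})|$ from being all, or almost all, of $V\setminus A_{t}$ at that particular $t$; then $A_{t}^{+}$ is empty (so the set is inadmissible in the definition of $\gamma$) or the correction factor $(n-|A_{t}|)/|A_{t}^{+}|$ is unbounded. Your proposed repair --- restricting $t$ to a window where $A_{t}$ and $A_{t}^{+}$ are constant fractions of $V$ --- is not available for free: the lower bound $\int|A_{t}|(n-|A_{t}|)\,dt\ge n/2$ is a full-line identity (it equals $\sum_{i<j}|g_{i}-g_{j}|$), the restricted integral can be tiny if the eigenvector entries are concentrated, and no lemma in your sketch guarantees that balanced threshold sets exist at all. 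This passage from the co-area quantities to a \emph{vertex} boundary with a genuinely nonempty far set $X^{+}$ is precisely the nontrivial content of Alon's lemma: for edge expansion thresholding suffices (Cheeger's inequality), but for vertex expansion the known proofs require an additional combinatorial ingredient beyond the threshold computation, so what you have deferred as ``the real work'' is the entire theorem-specific difficulty rather than a lower-order correction. Finally, even if you closed the spectral step by citing Alon's lemma as the paper does, your one-directional path count $\xi(G)\ge 1/\gamma(G)$ would leave you a factor $2$ short of the stated bound; counting both ordered directions, so that $2|X||X^{+}|$ paths must cross $N(X)$, repairs this immediately.
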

\begin{proof}
By Lemma~2.4 in \cite{alon1986}, we have
$$\sigma_1\ge \frac{c^2}{4+2c^2},$$
where $c$ satisfies $ \frac{|N(X)|}{|X|}\ge c$ for every $|X|\le
\frac{n}{2}$ and $X\subset U$. Hence
$$\gamma(G)\le c\le \sqrt{\frac{4\sigma_1}{1-2\sigma_1}}.$$
On the other hand, there exists a subset $U$ such that
$\gamma(G)=\frac{|N(U)|}{|U||U^+|}$. It follows from the
definition of $\xi(G)$ that $2|U||U^+|\ge \xi(G) |N(U)|$, since
there does not exist edges between $U$ and $U^+$. Hence
$$\xi(G)\ge \frac{2|U||U^+|}{|N(U)|}=\frac{2}{\gamma(G)}
\ge \sqrt{\frac{1-2\sigma_1}{\sigma_1}}.$$ We finish the proof.
\end{proof}
\begin{lemma}
\label{edgecut} Let $G$ be a graph of order $n$ with average
degree $d$ and let   $\beta(G)=\min \{\frac{|e(U, V\setminus
U)|}{|U|(n-|U|)}, \ \ 1\le |U|\le n-1\}$.   If the Laplacian
eigenvalues satisfy $|d-\sigma_i||le \theta$ for $i\neq 0$, then
$$\beta(G)\le \frac{d+\theta}{n}.$$
\end{lemma}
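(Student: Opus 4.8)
The plan is to bound $\beta(G)$ directly by applying the discrepancy inequality of Theorem~\ref{disc}. Since $\beta(G)$ is defined as a minimum over subsets $U$, it suffices to produce a \emph{single} choice of $U$ (in fact any $U$) whose edge-boundary ratio does not exceed $\frac{d+\theta}{n}$; the minimum is then automatically at most this value. So I would fix an arbitrary $U$ with $1 \le |U| \le n-1$ and feed the pair $X = U$, $Y = V\setminus U$ into Theorem~\ref{disc}.

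The key simplification comes from the fact that $U$ and $V\setminus U$ are disjoint. This makes the two correction terms $d|X\cap Y|$ and ${\rm vol}(X\cap Y)$ in the statement of Theorem~\ref{disc} vanish, so the inequality collapses to a clean bound on $|e(U, V\setminus U)|$ against $\frac{d}{n}|U|(n-|U|)$. On the right-hand side I would substitute $|Y| = n-|U|$ and $n - |Y| = |U|$, so the radicand becomes $|U|(n-|U|)\cdot(n-|U|)|U|$, whose square root is exactly $|U|(n-|U|)$. This is the one computation worth doing explicitly, since it is what makes the two sides cohere.

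From here the finish is immediate: the discrepancy inequality yields
$$|e(U, V\setminus U)| \le \frac{d}{n}|U|(n-|U|) + \frac{\theta}{n}|U|(n-|U|) = \frac{d+\theta}{n}\,|U|(n-|U|).$$
Dividing through by $|U|(n-|U|)$ gives $\frac{|e(U, V\setminus U)|}{|U|(n-|U|)} \le \frac{d+\theta}{n}$, and taking the minimum over all admissible $U$ delivers $\beta(G) \le \frac{d+\theta}{n}$.

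I do not expect any genuine obstacle here, as the result is essentially a one-line corollary of Theorem~\ref{disc}. The only point requiring a moment of care is the bookkeeping of the intersection terms and the square-root simplification; if either is mishandled one would pick up spurious factors. It is also worth noting that the bound holds for every $U$ simultaneously, so the role of the minimum is merely to record the weakest of these uniform estimates rather than to require an optimized choice of $U$.
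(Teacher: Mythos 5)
Your proof is correct and is essentially the paper's own argument: both apply Theorem~\ref{disc} to $X=U$, $Y=V\setminus U$, use disjointness to eliminate the intersection terms, and simplify the radicand to $|U|(n-|U|)$, yielding $|e(U,V\setminus U)|\le \frac{d+\theta}{n}|U|(n-|U|)$. The only cosmetic difference is that the paper evaluates this at a minimizing $U$ while you note it holds uniformly for every admissible $U$; the conclusion is the same.
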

\begin{proof}
By the definition of $\beta(G)$, there exists a subset $U$ such
that $\beta(G)=\frac{|e(U, V\setminus U)|}{|U|(n-|U|)}$. On the
other hand, by Theorem~\ref{disc},  we have
$$||e(U,V\setminus U)|-\frac{d}{n}|U|(n-|U|)|le
\frac{\theta}{n}|U|(n-|U|).$$
 Hence $\beta(G)\le \frac{d+\theta}{n}$.
 \end{proof}

\begin{theorem}
\label{edgeindices} Let $G$ be a graph of order $n$ with average
degree $d$. If the Laplacian eigenvalues satisfy $|d-\sigma_i|\le
\theta$ for $i\neq 0$, then
$$\pi(G)\ge\frac{2n}{d+\theta}.$$
\end{theorem}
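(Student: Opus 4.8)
The plan is to combine the isoperimetric bound from Lemma~\ref{edgecut} with a counting argument over paths crossing a sparse edge cut. The point is that a lower bound on $\pi(G)$ should hold for \emph{every} routing $R$, so I want to exhibit, for an arbitrary fixed $R$, a single edge that is necessarily heavily loaded.

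First I would invoke Lemma~\ref{edgecut}: since $|d-\sigma_i|\le\theta$ for $i\neq 0$, we have $\beta(G)\le\frac{d+\theta}{n}$, and by the definition of $\beta(G)$ there exists a subset $U$ with $1\le|U|\le n-1$ such that
$$|e(U,V\setminus U)|\le\frac{d+\theta}{n}\,|U|(n-|U|).$$
This is the ``sparse cut'' I will exploit: the boundary between $U$ and its complement is small relative to $|U|(n-|U|)$.

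Next I would fix any routing $R$ and count the paths that must traverse this cut. There are exactly $2|U|(n-|U|)$ ordered pairs $(x,y)$ with one endpoint in $U$ and the other in $V\setminus U$ (the factor $2$ accounts for both directions, since a routing specifies a path for each \emph{ordered} pair). Every path joining such a separated pair must use at least one edge of $e(U,V\setminus U)$. Summing the edge loads $\pi(G,R,e)$ over the cut edges therefore gives at least $2|U|(n-|U|)$. By averaging (pigeonhole) over the $|e(U,V\setminus U)|$ edges of the cut, some cut edge $e$ satisfies
$$\pi(G,R,e)\ge\frac{2|U|(n-|U|)}{|e(U,V\setminus U)|}\ge\frac{2|U|(n-|U|)}{\frac{d+\theta}{n}|U|(n-|U|)}=\frac{2n}{d+\theta},$$
where the second inequality is exactly the sparse-cut bound from the first step. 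Hence $\max_{e}\pi(G,R,e)\ge\frac{2n}{d+\theta}$ for the chosen $R$; since $R$ was arbitrary and $\pi(G)=\min_R\max_e\pi(G,R,e)$, the claimed bound follows.

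The routine parts are the averaging step and the algebraic simplification. The one place to be careful — and what I would regard as the main (if modest) obstacle — is the combinatorial bookkeeping in the second step: getting the factor of $2$ right by correctly counting \emph{ordered} separated pairs, and justifying cleanly that each such path crosses the cut at least once so that the total crossing count lower-bounds the sum of edge loads over the cut rather than over all edges. Once that counting is pinned down, the inequality drops out immediately from Lemma~\ref{edgecut}.
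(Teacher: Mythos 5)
Your proof is correct and follows essentially the same route as the paper, which simply combines Lemma~\ref{edgecut} with Sol\'{e}'s inequality $\pi(G)\beta(G)\ge 2$ cited from \cite{sole1995}. Your counting argument over ordered separated pairs is precisely the standard proof of that cited inequality, so you have merely made the paper's one-line proof self-contained.
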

\begin{proof}
It follows from  Theorem~1 $\pi(G)\beta(G)\ge 2$ in
\cite{sole1995} and Lemma~\ref{edgecut} that the result holds.
\end{proof}

{\bf Remark} The lower bounds for $\xi(G) $ and $\pi(G)$ are tight
up to a constant factor. For example, Let $P_n$ be a path of order
$n$. It is easy to see that
$\xi(P_n)=2(\lfloor\frac{n}{2}\rfloor(\lceil\frac{n}{2}\rceil-1)$,
$\pi(G)=2\lfloor\frac{n}{2}\rfloor\lceil\frac{n}{2}\rceil$; while
$\sigma_1=4\sin^2\frac{\pi}{2n}$.

\vskip2cm

\frenchspacing


\begin{thebibliography}{}
\bibitem{alon1986}N.~Alon, Eigenvalues and expanders, {\it
Combinatorica}, 6(1986), 83-96.

\bibitem{bondy1976} J.~A.~Bondy and U.~S.~R.~Murty, {\it Graph theory with
applications,}  Macmillan Press, New York, 1976.


\bibitem{chung1997}F.R.K.Chung, Spectral Graph Theorey, CBMS
Lecture Notes, AMS Publications, 1997.

\bibitem{chung2004}F.R.K.Chung, Discrete isoperimetric
inequalities, {\it Surveys in Differential Geometry IX,}
 International Press, (2004), 53--82.

\bibitem{chvatal}V. Chvatal and P. Erdos, A note on Hamiltonian
circuits, {\it Discrete Mathematics}, 27(1972) 111-113.

\bibitem{fiedler}M. Fiedler, Algebraic connectivityof graphs, {\it
Czechoslovak Math. J. } 23(1973), 298-305.

\bibitem{goldsmith1979}D.~C.~Goldsmith and R.~C.~Entringer, A
sufficient condition for equality of edge connectivity and minimum
degree of a graph, {\it J. Graph Theory} 3(1979) 251-255.



\bibitem{grone1994}R.~Grone and R.~ Merris, The Laplacian spectrum
of a graph. II. {\it SIAM J. Discrete Math.,} 7 (1994), no. 2,
221-229.
\bibitem{heydemann1989} M.~C.~Heydemannn, J.~C.~Meyer and
D.~Sotteau, On forwarding indices of networks, {\it Discrete
Applied Mathematics} 23(1989)101-123.

\bibitem{krivelevich2004}M.~Krivelevich and B.~Sudakov,
Pseudo-random graphs, In: {\it More sets, graphs and numbers,} E.
Gy\H{o}ri, G.~O.~H.~Katona and L. Lov\'{a}sz, Eds., Bolyai Society Mathematical Studies Vol. 15, 199-262. 


\bibitem{Merris1994} R. Merris, Laplacian matrices of graphs: A
survey, {\it Linear Algebra  and Applications,} 197-198 (1994):
143-176.

\bibitem{Merris1998} R. Merris, A note on
Laplacian graph eigenvalues, {\it Linear Algebra  and
Applications,} 285(1998), 33-35.

\bibitem{mohar1992} B.~Mohar, A domain monotonicity theorem for
graphs and Hamiltonicity, {\it Discrete Applied Mathematics}
36(1992)169-177.

\bibitem{sole1995}P.~Sole, Expanding and forwarding,{\it Discrete Applied Mathematics}
 58(1995)67-78.

\bibitem{zhang2004} X.~D.~Zhang, On the two conjectures of Graffiti,
 {\it Linear
 Algebra and Applications,} 385(2004) 369-379.


\end{thebibliography}
\end{document}